\newtheorem {theorem}{Theorem}[section]
\newtheorem {lemma} [theorem] {Lemma}
\newtheorem {proposition} [theorem] {Proposition}
\newtheorem* {claim*}{Claim}
\newtheorem* {subclaim*}{Subclaim}
\newtheorem {question} [theorem] {Question}
\theoremstyle{definition}
\newtheorem{remark}[theorem]{Remark}
\newtheorem {definition} [theorem] {Definition}
\newcommand{\rightQ}[2]{\left.\raisebox{.2em}{$#1$}\middle/\raisebox{-.2em}{$#2$}\right.}
\newcommand{\leftQ}[2]{\left.\raisebox{-.2em}{$#2$}\middle\backslash\raisebox{.2em}{$#1$}\right.}
\newcommand{\mc}{\mathcal}
\newcommand{\from}{\colon\thinspace}
\newcommand{\bZ}{\mathbb{Z}}
\newcommand{\bR}{\mathbb{R}}
\newcommand{\bS}{\mathbb{S}}
\newcommand{\llangle}{\langle\negthinspace\langle}
\newcommand{\rrangle}{\rangle\negthinspace\rangle}
\begin{document}

\title{Special IMM groups}

\author[D. Groves]{Daniel Groves}
\address{Department of Mathematics, Statistics, and Computer Science,
University of Illinois at Chicago,
322 Science and Engineering Offices (M/C 249),
851 S. Morgan St.,
Chicago, IL 60607-7045}
\email{groves@math.uic.edu}

\author[J.F. Manning]{Jason Fox Manning}
\address{Department of Mathematics, 310 Malott Hall, Cornell University, Ithaca, NY 14853}
\email{jfmanning@cornell.edu}

\thanks{The first author was supported in part by NSF Grant DMS-1904913.  The second author thanks the Simons Foundation for support under grant (\#524176, JFM)}

\begin{abstract}
Italiano--Martelli--Migliorini recently constructed hyperbolic groups which have non-hyperbolic subgroups of finite type.    Using a closely related construction, Llosa Isenrich--Martelli--Py constructed hyperbolic groups with subgroups of type $\mc{F}_3$ but not $\mc{F}_4$.  We observe that these hyperbolic groups can be chosen to be special in the sense of Haglund--Wise.
\end{abstract}

\date{\today}

\maketitle
\section{Introduction}
In \cite{IMM5} Italiano--Martelli--Migliorini construct the first examples of hyperbolic groups containing non-hy\-per\-bol\-ic subgroups of finite type.  This answered a well known question which had been open for many years (see \cite[Question 1.1]{bestvina:problems2004}, \cite[Question 7.2]{brady:subgroups}, \cite[$\S7$]{jnw}).  In this note, we point out that these examples  can be constructed so as to be \emph{special}, in the sense of Haglund--Wise \cite{HW08}.  In particular, these examples embed in right-an\-gled Artin groups, and thus inherit linearity, residual finiteness, and geometric subgroup separability properties from those groups.  Also, being nonabelian, these groups surject a nonabelian free group \cite[Theorem 1.1]{bregman}.  We also explain how Llosa Isenrich--Martelli--Py's examples in \cite[Theorem 1]{LIMP} of hyperbolic groups with subgroups of type $\mathscr{F}_3$ but not $\mathscr{F}_4$ can be chosen to be special.

The hyperbolic manifold examples constructed in \cite{IMM5} and \cite{LIMP} are commensurable with right-an\-gled Coxeter groups, so their virtual spe\-cial-ness is immediate.  However the manifolds constructed are only finite volume and not compact, so \cite{IMM5} and \cite{LIMP} must apply Dehn filling techniques to obtain hyperbolic groups with the appropriate subgroup structure.  By using a version of the Malnormal Special Quotient Theorem due to Wise \cite{WisePUP}, we will explain how this Dehn filling can be chosen to preserve virtual spe\-cial-ness.

\begin{theorem}\label{thm:main}
  There are infinitely many (pairwise non-isomorphic) special hyperbolic groups, each of which fits into a short exact sequence
  \[ 1 \to K \to G \to \bZ \to 1 \]
  so that $K$ is non-hyperbolic and finite type.
\end{theorem}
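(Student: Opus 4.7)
The plan is to revisit the construction of \cite{IMM5} and show that its Dehn filling step can be arranged to preserve virtual specialness; passing to a torsion-free, special finite-index subgroup then yields the desired groups. Recall that \cite{IMM5} produce a cusped finite-volume hyperbolic $5$-manifold $M$ fibering over $S^1$, so $\pi_1(M)$ fits into a short exact sequence $1 \to K_0 \to \pi_1(M) \to \bZ \to 1$ with $K_0$ of finite type but non-hyperbolic. Since $M$ is commensurable with a right-angled Coxeter group, $\pi_1(M)$ is virtually compact special in the sense of Haglund--Wise, and its peripheral subgroups (virtually $\bZ^4$) are malnormal in a suitable finite-index subgroup. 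IMM then Dehn fill $M$ along cusp slopes mapping to $0 \in \bZ$ so that the fibration descends to the filled quotient.

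The key step is to apply Wise's Malnormal Special Quotient Theorem \cite{WisePUP} to a virtually compact special finite-index subgroup $H \leq \pi_1(M)$ with its induced peripheral structure $\{P_i\}$. The MSQT provides finite-index subgroups $P_i' \leq P_i$ such that any filling of $H$ along slopes in the $P_i'$ remains virtually compact special, and the relatively hyperbolic Dehn filling theorem of Osin and Groves--Manning ensures that sufficiently deep such fillings also remain hyperbolic. The crucial observation is that within each $P_i \cong \bZ^4$, the intersection of $P_i'$ with the sublattice of fibered slopes (those mapping to $0 \in \bZ$ under the cusp-to-$\bZ$ map) is itself a sublattice of rank $3$, so contains infinitely many primitive fibered slopes. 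Filling $\pi_1(M)$ along a collection of such slopes, chosen sufficiently deep, yields a hyperbolic virtually special group $\bar G$ surjecting $\bZ$ with kernel that is finite type and non-hyperbolic, by the same arguments as in \cite{IMM5}.

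Finally, pass to a torsion-free special finite-index subgroup $G \leq \bar G$ and set $K := G \cap \ker(\bar G \to \bZ)$; the image of $G$ in $\bZ$ has finite index and so is itself $\bZ$, giving the desired short exact sequence $1 \to K \to G \to \bZ \to 1$ with $G$ special hyperbolic and $K$ non-hyperbolic of finite type. Infinitely many pairwise non-isomorphic examples are obtained by carrying out this procedure along infinitely many distinct tuples of fibered slopes, distinguishing the resulting groups by, for instance, their abelianizations. The main technical obstacle I anticipate is coordinating the three constraints on the filling slopes simultaneously — MSQT containment in the $P_i'$, fiberedness, and sufficient depth for hyperbolicity and MSQT applicability — and verifying that the fibered sublattice meets $P_i'$ in a lattice of large enough rank to supply the required slopes; the high dimension ($4$) of the cusps is what makes this feasible, and the same recipe should apply verbatim to the \cite{LIMP} examples to give the $\mathscr{F}_3$-but-not-$\mathscr{F}_4$ variant.
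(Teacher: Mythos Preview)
Your core strategy---apply the relatively hyperbolic Malnormal Special Quotient Theorem so that a Dehn filling compatible with the fibration is virtually special, then pass to a torsion-free special finite-index subgroup---is exactly the paper's approach. Two points deserve comment.

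First, a terminological slip that could become a genuine error: in each cusp $P_i\cong\bZ^4$ one does not fill along a single ``slope'' (a primitive element) but along a corank-one sublattice $N_{P_i}$, so that $P_i/N_{P_i}$ is virtually cyclic; otherwise the quotient still contains a $\bZ^3$ and cannot be hyperbolic. What you really want is that $\dot P_i\cap P_i\cap\ker f$ has rank $3$, and then take $N_{P_i}$ to be a finite-index subgroup of that rank-$3$ lattice. Your observation about ranks is correct and is precisely what makes this possible, but the ``infinitely many primitive slopes'' phrasing suggests the $3$--manifold picture rather than the higher-dimensional one.

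Second, and more substantively, your route to infinitely many examples has a gap. You propose varying the filling kernels and distinguishing the outputs by abelianization, but the passage to an \emph{uncontrolled} special finite-index subgroup $G\le\bar G$ destroys whatever grip you had on $H_1$; there is no reason different $\bar G$'s yield special subgroups with different abelianizations. The paper instead fixes a \emph{single} filling---adding a $2\pi$ length condition on the $N_{P_i}$ (which you omit) so that the filled space is a closed locally $\CAT(-1)$ pseudomanifold---and then takes deeper and deeper finite-index subgroups of the one resulting special group $\overline G_0$. These covers are distinguished by simplicial volume, which is positive for a negatively curved closed pseudomanifold and multiplicative under finite covers. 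This avoids comparing different fillings altogether, and incidentally yields commensurable examples.
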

These are particular cases of the construction in \cite{IMM5}.  
Thus the hyperbolic group $G$ is the fundamental group of a negatively curved (hence aspherical) pseudo-manifold of dimension $5$, and $K$ is the fundamental group of an aspherical pseudo-manifold of dimension $4$.  

\begin{theorem}\label{thm:main2}
 There are infinitely many (pairwise non-isomorphic) special hyperbolic groups, each of which fits into a short exact sequence
 \[	1 \to K \to G \to \bZ \to 1 \]
 so that $K$ is of type $\mathscr{F}_3$ but not of type $\mathscr{F}_4$.
\end{theorem}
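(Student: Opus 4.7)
The plan is to follow the same template used to prove Theorem~\ref{thm:main}, but starting with the Llosa Isenrich--Martelli--Py construction \cite{LIMP} in place of \cite{IMM5}. The first step is to recall from \cite{LIMP} that there is a finite-volume cusped hyperbolic manifold $M$ whose fundamental group is commensurable with a right-angled Coxeter group, together with a surjection $\phi\from \pi_1(M)\onto \bZ$ (defined after passing to a finite cover) whose kernel is of type $\mathscr{F}_3$ but not of type $\mathscr{F}_4$. After passing to a torsion-free finite-index subgroup $\Gamma\leq \pi_1(M)$, we may assume $\Gamma$ is compact special and relatively hyperbolic with virtually abelian peripheral subgroups $P_1,\ldots,P_k$ that are malnormal.

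The second step is to carry out the Dehn filling from \cite{LIMP} in a way that is compatible with specialness. Wise's Malnormal Special Quotient Theorem \cite{WisePUP} provides finite-index subgroups $P_i'\leq P_i$ such that any filling with slopes contained in finite-index subgroups of the $P_i'$ yields a virtually compact special quotient; combined with the relatively hyperbolic Dehn filling theorem, provided the slopes also avoid a finite set of elements in each $P_i$, the quotient is hyperbolic. Refining the \cite{LIMP} choice of filling slopes to lie inside the $P_i'$ then produces infinitely many pairwise non-isomorphic hyperbolic quotients $\bar{\Gamma}$ (distinguished, e.g., by the depth of the filling) which are virtually compact special and fit into short exact sequences $1\to K\to \bar{\Gamma}\to\bZ\to 1$ with $K$ of type $\mathscr{F}_3$ but not $\mathscr{F}_4$, exactly as in \cite{LIMP}.

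Finally, to upgrade from virtually special to special, I would replace $\bar{\Gamma}$ by a compact special finite-index subgroup $G\leq \bar{\Gamma}$. Restricting the surjection $\bar{\Gamma}\onto\bZ$ to $G$ has image a finite-index subgroup of $\bZ$, hence isomorphic to $\bZ$, and kernel $K':=K\cap G$ of finite index in $K$. Since the finiteness property $\mathscr{F}_n$ is invariant under commensurability, $K'$ is of type $\mathscr{F}_3$ but not of type $\mathscr{F}_4$, yielding the desired short exact sequence for $G$.

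The principal obstacle is in the second step: arranging that the \cite{LIMP} filling satisfies the MSQT hypotheses while still producing the required finiteness behavior on $K$. This is exactly the same compatibility issue as in the proof of Theorem~\ref{thm:main}: we must intersect the filling kernels prescribed by \cite{LIMP} with the deeper subgroups demanded by \cite{WisePUP} and check that the Morse-theoretic argument of \cite{LIMP} detecting non-$\mathscr{F}_4$-ness of the kernel is insensitive to this refinement. I expect this to go through by the same reasoning as for Theorem~\ref{thm:main}, since passage to arbitrarily deep finite-index subgroups of the peripherals does not disturb the dimensional argument that controls the finiteness type of $K$.
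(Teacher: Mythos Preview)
Your overall strategy matches the paper's: start from the Llosa Isenrich--Martelli--Py $8$--manifold satisfying \ref{itm:finitevolume}--\ref{itm:racg}, perform a Dehn filling compatible with the Malnormal Special Quotient Theorem, and pass to a special finite-index subgroup, restricting the fibration. Two points deserve correction.

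First, the ``principal obstacle'' you flag is not handled ``by the same reasoning as for Theorem~\ref{thm:main}'': in that proof the kernel is of type $\mathscr{F}$ because the pseudomanifold literally fibres over $\bS^1$ with a compact aspherical fibre, and there is no non-$\mathscr{F}_k$ statement to verify. Here one must show both that $\mathscr{F}_3$ persists and that $\mathscr{F}_4$ fails after filling. The paper does not re-open the Morse-theoretic argument; it simply invokes \cite[Theorems~3 and~5]{LIMP} (recorded as Theorem~\ref{thm:limp}), which already establish that if $\ker(f)$ is $\mathscr{F}_k$ then so is the filled kernel, and if $n$ is even and $\ker(f)$ is not $\mathscr{F}_{n/2}$ then neither is the filled kernel. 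So the compatibility you worry about is a theorem in \cite{LIMP}, not something to be rechecked.

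Second, your route to infinitely many examples --- varying the depth of the filling --- is different from the paper's and, as written, incomplete: you give no invariant to distinguish the resulting quotients. The paper instead fixes a \emph{single} filling, obtains one special hyperbolic group $\overline{G}_0$ which is the fundamental group of a closed negatively curved pseudomanifold, and then passes to deeper and deeper finite-index subgroups (available by residual finiteness). These are pairwise non-isomorphic because simplicial volume is positive for closed negatively curved pseudomanifolds and multiplicative under finite covers.
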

These are particular cases of the construction in \cite{LIMP}, and thus $G$ is here the fundamental group of a negatively curved pseudo-manifold of dimension $8$.  Note that in order to obtain infinitely many examples in \cite[Theorem 1]{LIMP}, a delicate argument about perturbing the fibration was required.  However, once we have one example which is residually finite, we will obtain infinitely many examples by passing to finite index subgroups.  Simplicial volume will be used to prove there are infinitely many different examples.  Of course, these groups are all commensurable.

\subsection*{Acknowledgements}
Thanks to Gilbert Levitt for pointing us to Brinkmann's thesis \cite{brinkmann:thesis}.  Thanks to Colby Kelln for noticing a confusing typo in an earlier version of this note.  Thanks to the referee for pointing out a minor error in an earlier version.

\section{The examples}
In this section we recall some important facts about the manifold examples constructed by Italiano--Martelli--Migliorini in \cite{IMM8} and \cite{IMM5} and by Llosa Isenrich--Martelli--Py in \cite{LIMP}.  These examples satisfy the following hypotheses (after possibly passing to a finite sheeted cover).
\begin{enumerate}[label = (H\arabic*)]
\item\label{itm:finitevolume} $M$ is an orientable non-compact hyperbolic $n$--manifold of finite volume.
  \item\label{itm:toralcusp} $M$ contains disjoint open cusp neighborhoods $C_1,\ldots C_k$ so that $\overline M = M - \bigcup C_i$ is a compact manifold whose boundary consists of flat $(n-1)$--dimensional tori. 
\item\label{itm:relhyp}  If $G = \pi_1 M$ and $\mc{P} = \{\pi_1C_i\}_{i = 1}^k$, then $(G,\mc{P})$ is relatively hyperbolic.
\item\label{itm:algfiber} There is a surjective map $f \from G \to \bZ$ which is nontrivial on each $P\in\mc{P}$, and which has finitely generated kernel.
\item\label{itm:racg} $G$ is a finite index subgroup of a right-an\-gled Coxeter group $\Gamma$ and $\overline{M}$ is homotopy equivalent to the cube complex $\leftQ{D}{G}$ where $D$ is the Davis complex for $\Gamma$.
\end{enumerate}
We make a few comments on these features.

  The hypothesis in~\ref{itm:toralcusp} that the boundary consists of tori may be stronger than necessary for what we do.  In Section 2 of \cite{IMM5}, a fibered hyperbolic $5$--manifold is considered which does \emph{not} have torus cusp cross sections.  This $5$--manifold was previously considered by Ratcliffe--Tschantz \cite{RatcliffeTschantz04} and is the smallest known hyperbolic $5$--manifold.  We do not consider this example in this paper, though it should be possible to perform similar constructions on such an example.

  Hypothesis~\ref{itm:relhyp} follows from~\ref{itm:finitevolume} (see~\cite{farb:relhyp}).  
  
  Hypothesis~\ref{itm:algfiber} says that the map $f$ is an \emph{algebraic fibering} of $\pi_1M$.  We are most interested in the cases when $\ker(f)$ enjoys additional finiteness properties.  The examples in~\cite{LIMP} have the property that $\ker(f)$ is type $\mc{F}_3$ but not type $\mc{F}_4$.  The examples in~\cite{IMM5} have the property that $\ker(f)$ has type $\mc{F}$.

  Hypothesis~\ref{itm:racg} is immediate from the construction.  It follows that $\overline{M}$ is homotopy equivalent to a virtually special cube complex \cite{HW08}.

  We fix some $M$ as in Hypotheses~\ref{itm:finitevolume}-\ref{itm:racg} for the rest of the section.

\begin{definition}
 Given the relatively hyperbolic pair $(G,\mc{P})$ and a collection $\mc{N} = \{ N_P \lhd P \mid P \in \mc{P} \}$ the \emph{Dehn filling of $G$ along $\mc{N}$} is the group
 \[	G\left(\mc{N} \right) = \rightQ{G}{\left\llangle \cup_{N \in \mc{N}} N\right\rrangle}	.	\]
\end{definition}

The relatively hyperbolic version of the Malnormal Special Quotient Theorem \cite[Theorem 15.6]{WisePUP}, applied using Hypotheses~\ref{itm:relhyp} and~\ref{itm:racg}, yields the following.
\begin{proposition} \label{prop:MSQT_RH}
  There are finite index normal subgroups $\{\dot{P}\lhd P\mid P\in \mc{P}\}$ so that, for any collection $\mc{N} = \{N_P \lhd P\mid P\in \mc{P}\}$ with (i) $N_P<\dot{P}$ for each $P$; and (ii) each $P/N_P$ virtually cyclic, the Dehn filling $G(\mc{N})$ is hyperbolic and virtually compact special.  Moreover $\ker(G\to G(\mc{N}))\cap P = N_P$ for each $P\in \mc{P}$.
\end{proposition}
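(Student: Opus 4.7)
The plan is to reduce to Wise's MSQT for compact special cube complexes by first passing to a finite-index subgroup which is actually compact special, and then transfer the conclusion back to $G$ via a Cohen--Lyndon-type theorem for Dehn filling.

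Using~\ref{itm:racg}, I would choose a finite-index normal subgroup $G_0 \lhd G$ such that the cover $\leftQ{D}{G_0}$ of $\overline{M}$ is a compact special cube complex.  The relatively hyperbolic structure on $(G,\mc{P})$ restricts to one on $(G_0,\mc{P}_0)$, where $\mc{P}_0$ is built as follows: for each $P\in\mc{P}$, pick representatives $g_1,\ldots,g_k$ of the finitely many double cosets in $G_0\backslash G/P$ and include $P_j := g_j^{-1}Pg_j\cap G_0$, a finite-index (toral) subgroup of the conjugate $g_j^{-1}Pg_j$.

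Applying the relatively hyperbolic MSQT \cite[Theorem 15.6]{WisePUP} to $(G_0,\mc{P}_0)$ would then yield finite-index normal subgroups $\dot{P}_j\lhd P_j$ so that any Dehn filling of $G_0$ along subgroups $\dot{N}_j\lhd P_j$ with $\dot{N}_j\leq \dot{P}_j$ and $P_j/\dot{N}_j$ virtually cyclic is hyperbolic and virtually compact special.  For each $P\in\mc{P}$, I would take $\dot{P}$ to be a finite-index normal subgroup of $P$ contained in $\bigcap_{j} g_j\dot{P}_j g_j^{-1}$ (which is automatically normal as $P$ is abelian).  Given any $\mc{N}=\{N_P\}$ as in the statement, setting $\dot{N}_j := g_j^{-1}N_Pg_j\cap G_0$ produces subgroups normal in $P_j$, lying in $\dot{P}_j$, and with virtually cyclic quotients $P_j/\dot{N}_j$ (since these inject into $P/N_P$).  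Hence $\mc{N}_0:=\{\dot{N}_j\}$ satisfies the MSQT hypothesis, and $G_0(\mc{N}_0)$ is hyperbolic and virtually compact special.

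To conclude, I would invoke a Cohen--Lyndon-type theorem for Dehn fillings of relatively hyperbolic groups (in the spirit of Dahmani--Guirardel--Osin or Sun): for sufficiently small $\dot{P}$, the kernel $K := \ker(G\to G(\mc{N}))$ is a free product of $G$-conjugates of the $N_P$'s.  This immediately yields $K\cap P = N_P$ for each $P$, and also gives $K\cap G_0 = \ker(G_0\to G_0(\mc{N}_0))$, so that $G_0(\mc{N}_0)$ embeds in $G(\mc{N})$ as a finite-index normal subgroup.  Since hyperbolicity and virtual compact special-ness both pass through finite extensions, $G(\mc{N})$ inherits them.  The principal obstacle is arranging $\dot{P}$ to be simultaneously deep enough for the MSQT applied to $(G_0,\mc{P}_0)$ and for the Cohen--Lyndon theorem applied to $(G,\mc{P})$; since both are cofinite conditions on each $P$, shrinking $\dot{P}$ further suffices.
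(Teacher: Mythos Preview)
The paper gives no separate proof; it presents the proposition as an immediate application of \cite[Theorem~15.6]{WisePUP} to $(G,\mc{P})$, using Hypotheses~\ref{itm:relhyp} and~\ref{itm:racg}.  Wise's relatively hyperbolic MSQT already applies to virtually compact special input, so no preliminary passage to a genuinely special finite-index subgroup is required, and the ``moreover'' clause is part of the standard relatively hyperbolic Dehn filling package.

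Your argument is correct but takes a longer route: you first descend to a special finite-index subgroup $G_0$, apply the MSQT there, and then transport the conclusion back up to $G$.  Two simplifications are available in the transfer step.  First, the identification $K\cap G_0=\ker\bigl(G_0\to G_0(\mc{N}_0)\bigr)$ does not require Cohen--Lyndon: since each $N_P\le\dot P\le G_0$ and $G_0\lhd G$ one already has $K\le G_0$, and any $G$--conjugate $gN_Pg^{-1}$ can be rewritten, via the double-coset decomposition $G=\bigsqcup_j G_0 g_j P$ together with $N_P\lhd P$, as a $G_0$--conjugate of one of the $\dot N_j$; the two normal closures therefore coincide by elementary group theory.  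Second, $K\cap P=N_P$ follows from standard relatively hyperbolic Dehn filling (Osin, or Groves--Manning) for sufficiently deep fillings, without invoking the finer free-product structure of Cohen--Lyndon.  Your detour would be genuinely necessary only if the MSQT were available solely for special rather than virtually special input; in the present setting it is not, so the paper's one-line citation suffices.
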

Using Hypotheses~\ref{itm:toralcusp} and~\ref{itm:algfiber} the following lemma says we can find appropriate $\mc{N}$ as above which moreover interact nicely with the algebraic fibering $f$.
\begin{lemma}\label{lem:fillingkernels}
  There is a collection $\mc{N} = \{ N_P \lhd P \mid P \in \mc{P} \}$ which satisfy the following conditions for each $P \in \mc{P}$:
\begin{enumerate}
\item\label{itm:msqt} $N_P \le \dot{P}$;
\item\label{itm:virtZ} $P/N_P$ is virtually cyclic;
\item\label{itm:inkernel} $N_P <\ker(f)$;
\item\label{itm:2pi} $N_P - \{1\}$ contains no element whose representative as a curve on the boundary of $\overline{M}$ has length $\le 2\pi$.
\end{enumerate}
\end{lemma}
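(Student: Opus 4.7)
The plan is to construct each $N_P$ independently (using that $\mc{P}$ is finite by \ref{itm:toralcusp}) as a suitably deep sublattice of $\ker(f|_P) \cap \dot{P}$ inside the abelian group $P \cong \bZ^{n-1}$.

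First I would set up coordinates. By \ref{itm:toralcusp}, $P$ acts as a rank $(n-1)$ lattice of translations on the horosphere $\bR^{n-1}$, and the induced Euclidean metric records the lengths of closed geodesic representatives of conjugacy classes on the corresponding boundary torus of $\overline{M}$. By \ref{itm:algfiber}, $f|_P$ is nontrivial, so $K_P := \ker(f|_P)$ has rank $n-2$. I would then form $Q_P := K_P \cap \dot{P}$, which has finite index in $K_P$, hence rank $n-2$ as well.

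Next I would set $N_P := m\, Q_P$ for a positive integer $m$ to be chosen large. Conditions (\ref{itm:msqt}) and (\ref{itm:inkernel}) follow immediately from $N_P \le Q_P \le \dot{P} \cap \ker(f)$; normality of $N_P$ in $P$ is automatic because $P$ is abelian. For (\ref{itm:virtZ}), the short exact sequence
\[
1 \to K_P/N_P \to P/N_P \to P/K_P \to 1
\]
exhibits $P/N_P$ as an extension of the infinite cyclic group $P/K_P$ by the finite group $K_P/N_P$, so $P/N_P$ is virtually cyclic. For (\ref{itm:2pi}), the shortest nonzero vector of $N_P = m Q_P$ has Euclidean norm $m \cdot \ell_P$, where $\ell_P > 0$ is the systole of the lattice $Q_P$; so any $m > 2\pi/\ell_P$ works. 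Doing this (with possibly different $m$) for each of the finitely many $P \in \mc{P}$ yields the desired collection $\mc{N}$.

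I do not expect a genuine obstacle. The one point worth appreciating is a mild tension among the four conditions: (\ref{itm:msqt}), (\ref{itm:inkernel}), and (\ref{itm:2pi}) push $N_P$ to be ``deep'' in $P$, while (\ref{itm:virtZ}) demands that $N_P$ retain the full rank $n-2$ of $K_P$. Both are easily arranged because $K_P$ is free abelian of rank $n-2$, in which deep full-rank sublattices are plentiful; the torus-cusp hypothesis \ref{itm:toralcusp} is what makes this abelian-group bookkeeping available.
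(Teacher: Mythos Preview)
Your proposal is correct and is essentially a fleshed-out version of the paper's own proof, which simply observes that since each $P\cong\bZ^{n-1}$ and only finitely many boundary geodesics have length $\le 2\pi$, finding such $N_P$ is straightforward. Your explicit choice $N_P = m\,(K_P\cap\dot P)$ for $m$ large is exactly the kind of construction the paper has in mind.
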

\begin{proof}
  Since the $P$ are all isomorphic to $\bZ^{n-1}$, and there are only finitely many closed geodesics on each boundary component of $\overline{M}$ of length at most $2\pi$, finding $N_P$ which satisfy the above conditions is straightforward.
\end{proof}
The next result uses all the hypotheses~\ref{itm:finitevolume}-\ref{itm:racg}.
\begin{proposition}\label{prop:summary}
  Let $\mc{N}$ be a collection satisfying the conclusions of Lemma~\ref{lem:fillingkernels}, and let $\overline{G} = G(\mc{N})$.
  Then the algebraic fibering $f\from G\to \bZ$ is equal to $\overline{f}\circ q$, where $q$ is the Dehn filling map $q\from G\to\overline{G}$ and $\overline{f}\from \overline{G}\to \bZ$ is an algebraic fibering.
  Moreover $\overline{G}$ contains a finite index normal subgroup $\overline{G}_0$ so that the following hold.
  \begin{itemize}
  \item $\overline{G}_0$ is special.
  \item $\overline{G}_0$ is the fundamental group of a closed orientable negatively curved  pseudomanifold of dimension $n$.
  \end{itemize}
\end{proposition}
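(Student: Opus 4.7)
My proof splits naturally into two pieces: a diagram chase to produce the factorization of $f$, and the geometric and group-theoretic work identifying $\overline{G}_0$. For the factorization, condition~(\ref{itm:inkernel}) of Lemma~\ref{lem:fillingkernels} gives $N_P\subseteq\ker f$ for each $P\in\mc{P}$, so $\ker q=\llangle\bigcup_P N_P\rrangle\subseteq\ker f$. Hence $f$ descends to a unique surjection $\overline{f}\colon\overline{G}\to\bZ$ with $\overline{f}\circ q=f$. Since $\ker\overline{f}=q(\ker f)$ is a homomorphic image of the group $\ker f$ (finitely generated by~\ref{itm:algfiber}), it too is finitely generated, making $\overline{f}$ an algebraic fibering.

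For the remaining claims I would proceed in two steps. For specialness and hyperbolicity, conditions~(\ref{itm:msqt}) and~(\ref{itm:virtZ}) are arranged precisely so Proposition~\ref{prop:MSQT_RH} applies, giving that $\overline{G}$ is hyperbolic and contains a finite-index compact special subgroup; passing to its normal core yields a finite-index normal subgroup $\overline{G}_1\lhd\overline{G}$ which is itself compact special. For the pseudomanifold structure I would appeal to the geometric recipe of~\cite{IMM5,LIMP}: topologically attach to each torus boundary component of $\overline{M}$ a cone-like filling piece whose fundamental group is the virtually cyclic quotient $P/N_P$ from condition~(\ref{itm:virtZ}), producing a closed pseudomanifold $\widehat{M}$ with $\pi_1\widehat{M}\cong\overline{G}$. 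Condition~(\ref{itm:2pi}) is precisely the (higher-dimensional) $2\pi$-hypothesis that permits a locally $\CAT(-1)$ metric on each filling piece matching the cusped hyperbolic metric on $\partial\overline{M}$; this yields a globally negatively curved metric on $\widehat{M}$. Orientability is inherited from hypothesis~\ref{itm:finitevolume} together with the explicit form of the fillings (or, failing that, arranged by passing to a further index-two subgroup).

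Finally $\overline{G}_0$ is taken to be $\overline{G}_1$ (intersected with any index-two orientation-preserving subgroup needed above); it is still finite-index and normal in $\overline{G}$, still special (as specialness is inherited by finite-index subgroups), and is $\pi_1$ of the corresponding finite cover of $\widehat{M}$, which remains a closed orientable negatively curved pseudomanifold of dimension $n$. The main obstacle is the geometric verification that condition~(\ref{itm:2pi}) actually produces a globally negatively curved pseudomanifold in dimensions $n\in\{5,8\}$; but that is exactly what is established in~\cite{IMM5} and~\cite{LIMP}, and condition~(\ref{itm:2pi}) has been arranged to feed directly into those constructions, so the formal proof is mainly a matter of citing their results.
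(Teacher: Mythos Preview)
Your factorization argument and the invocation of Proposition~\ref{prop:MSQT_RH} are correct and match the paper.  The gap is in the pseudomanifold step.  You build $\widehat{M}$ directly from $\overline{M}$ by attaching ``cone-like filling pieces'' with fundamental group $P/N_P$, and only afterwards pass to a finite cover.  But $P/N_P$ is only \emph{virtually} cyclic: $N_P$ is a rank--$(n-2)$ sublattice of $P\cong\bZ^{n-1}$ which need not be a direct summand, so the quotient may well have torsion.  The explicit filling piece one actually attaches is $C(T^{n-2})\times S^1$, and the $\CAT(-1)$ result one needs is Fujiwara--Manning~\cite{fm:cat0} (not \cite{IMM5,LIMP}); both of these require $N_P$ to be the fundamental group of a genuine totally geodesic sub-torus of the boundary, i.e.\ a direct summand with quotient exactly $\bZ$.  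There is no evident pseudomanifold realizing the filling at the level of $\overline{G}$ itself when $P/N_P$ has torsion.

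The paper repairs this by reversing your order of operations.  One first chooses the finite-index normal special (hence torsion-free) subgroup $\overline{G}_0\lhd\overline{G}$, sets $G_0=q^{-1}(\overline{G}_0)$, and passes to the corresponding cover $\widetilde{M}\to\overline{M}$.  Using the ``Moreover'' clause of Proposition~\ref{prop:MSQT_RH} one identifies the induced filling kernel $N_0=N_P^g\cap G_0$ in each cusp subgroup $P_0<G_0$; since $P_0/N_0$ is virtually cyclic and embeds in the torsion-free group $\overline{G}_0$, it is isomorphic to $\bZ$ and $N_0$ is a direct summand of $P_0$.  Only now can one cone the geodesic $(n-2)$--tori in $\partial\widetilde{M}$ to points and apply~\cite{fm:cat0} under condition~(\ref{itm:2pi}) to obtain a closed $\CAT(-1)$ pseudomanifold with fundamental group $\overline{G}_0$.
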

\begin{proof}
  By Proposition~\ref{prop:MSQT_RH} and the choice of $N_P \le \dot{P}$, we see that $\overline{G}$ is a virtually special hyperbolic group.  Let $\overline{K}$ be the image of $\ker(f)$ in $\overline{G}$ under the natural quotient map $q \from G \to \overline{G}$.  Since the kernel of $q$ is normally generated by elements in $\ker(f)$, the algebraic fibering factors through $q$ as desired, and there is a short exact sequence
  \[	1 \to \overline{K} \to \overline{G} \stackrel{\overline{f}}{\to} \bZ \to 1	.	\]
  As the image of a finitely generated group, $\overline{K}$ is finitely generated.

Let $\overline{G}_0 \le \overline{G}$ be a finite index normal special subgroup, which is in particular torsion-free, and let $G_0 = q^{-1}\left(\overline{G}_0\right)$.  Since $G_0$ has finite index in $G$, there is a finite-sheeted regular covering $\widetilde{M} \to \overline{M}$ so that $G_0 = \pi_1\left(\widetilde{M}\right)$.

Let $P_0$ be a cusp subgroup of $G_0$, which is of the form $P_0 = P^g \cap G_0$ for some $P \in \mc{P}$ and $g \in G$.  By the last sentence of Proposition~\ref{prop:MSQT_RH}, the intersection of $\ker(q)$ with $P$ is $N_P$.
Let $N_0 = N_P^g \cap G_0$ be the intersection of $P_0$ with $\ker(q)$.  Then $P_0 / N_0$ is virtually cyclic and embeds in the torsion-free $\overline{G}_0$, so $P_0 / N_0 \cong \bZ$ and $N_0$ is a direct summand of $P_0$.
Moreover, $\ker(f) \cap P_0 \le N_0$, so the map $P_0 \to P_0 / N_0 \cong \bZ$ factors through the map  $P_0 \to \bZ$ induced by the fibration.  Since $\bZ$ is torsion free we have $\ker(f) \cap P_0 = N_0$.

We now have a short exact sequence
\[	1 \to \overline{K}_0 \to \overline{G}_0 \to \bZ \to 1	, \]
where $\overline{K}_0 = \overline{K} \cap \overline{G}_0$.

Each cusp group $P_0<G_0$ is the fundamental group of some boundary component $T_0\cong T^{n-1}$, fibered by totally geodesic tori of dimension $(n-2)$ whose fundamental groups are all equal to $N_0$.  On the level of topology, the map $q|_{G_0}$ is realized by coning all these $(n-2)$--dimensional tori to points, in other words attaching a copy of $C(T^{n-2})\times S^1$ to each boundary component.  The resulting space is a closed $n$--dimensional pseudomanifold, which is a manifold away from a finite collection of circles.  In~\cite{fm:cat0}, it is shown that under the condition~\eqref{itm:2pi} of Lemma~\ref{lem:fillingkernels}, this closed pseudo-manifold can be given a locally CAT$(-1)$ metric.
\end{proof}

The following summarizes two theorems from \cite{LIMP}; it uses the hypotheses \ref{itm:finitevolume}-\ref{itm:algfiber}.
\begin{theorem}\cite[Theorems 3 and 5]{LIMP}\label{thm:limp}
  Let $\overline{f}\from G(\mc{N})\to\bZ$ be as in Proposition~\ref{prop:summary}.  If $f_0 = \overline{f}|_{\overline G_0}$ and $\ker(f)$ is $\mc{F}_k$ for some $k\ge 0$, then $\ker(f_0)$ is also $\mc{F}_k$.  If $n$ is even and $\ker(f)$ is not $\mc{F}_{n/2}$, then neither is $\ker(f_0)$.
\end{theorem}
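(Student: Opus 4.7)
The plan is to realize $\ker(f|_{G_0})$ and $\ker(f_0)$ as fundamental groups of aspherical infinite cyclic covers and compare these covers directly. By Proposition~\ref{prop:summary}, $\overline{G}_0 = \pi_1(\widehat{M}_0)$ where $\widehat{M}_0$ is a closed orientable negatively curved pseudomanifold of dimension $n$, while $G_0 = \pi_1(\widetilde{M})$ for the finite cover $\widetilde{M} \to \overline{M}$. The fiberings $f|_{G_0}$ and $f_0$ are realized by maps to $S^1$, and taking infinite cyclic covers yields aspherical spaces $\widetilde{M}_\infty$ and $\widehat{M}_{0,\infty}$ which serve as $K(\cdot,1)$-models for $\ker(f|_{G_0})$ and $\ker(f_0)$ respectively.

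The key geometric step is to establish that $\widehat{M}_{0,\infty}$ is obtained from $\widetilde{M}_\infty$, up to homotopy, by collapsing finitely many disjoint boundary tori of dimension $n-2$ to points. By Hypothesis~\ref{itm:algfiber}, $f$ is nontrivial on each peripheral subgroup $P \cong \bZ^{n-1}$, so each cusp cross-section $T^{n-1}$ of $\widetilde{M}$ lifts to finitely many ends of $\widetilde{M}_\infty$, each modeled on $T^{n-2} \times \bR \times [0,\infty)$ with the $T^{n-2}$ factor equal to the kernel $N_0$ of the cusp fibration (as computed in the proof of Proposition~\ref{prop:summary}). The Dehn filling attaches $C(T^{n-2}) \times S^1$ along each $T^{n-1}$, which lifts in the cyclic cover to an attachment of the contractible piece $C(T^{n-2}) \times \bR$ along $T^{n-2} \times \bR$.

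For the first claim: if $\ker(f) \in \mc{F}_k$, then the finite-index subgroup $\ker(f|_{G_0})$ is also $\mc{F}_k$, so $\widetilde{M}_\infty$ is homotopy equivalent to a CW complex with finite $k$-skeleton. Collapsing finitely many copies of the finite CW complex $T^{n-2}$ to points preserves this property, so $\widehat{M}_{0,\infty}$ has finite $k$-skeleton up to homotopy; since $\widehat{M}_{0,\infty}$ is aspherical, this yields $\ker(f_0) \in \mc{F}_k$.

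For the second claim we argue by contraposition: assuming $\ker(f_0) \in \mc{F}_{n/2}$, we must deduce $\ker(f) \in \mc{F}_{n/2}$. The naive reverse of the first argument fails, since removing contractible pieces does not obviously preserve finiteness. The resolution, following \cite[Theorem 5]{LIMP}, applies Poincar\'e--Lefschetz duality on the closed orientable $n$-dimensional pseudomanifold $\widehat{M}_0$ with coefficients from the infinite cyclic cover; the evenness of $n$ is needed to pair cohomology in degree $n/2$ with homology in degree $n/2$ symmetrically around the middle dimension, and thereby to transfer finite generation properties between $\widetilde{M}_\infty$ and $\widehat{M}_{0,\infty}$ in this range. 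The principal obstacle, and the heart of the argument, is controlling the contribution of the pseudomanifold singularities (the coned-off points) to this duality computation.
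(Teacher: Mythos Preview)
The paper does not prove this theorem; it is stated as a summary of \cite[Theorems~3 and~5]{LIMP} and invoked without argument. Your proposal therefore goes beyond what the paper itself provides.

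Your argument for the first claim (preservation of $\mc{F}_k$) is correct and is a reasonable unpacking of \cite[Theorem~3]{LIMP}: the infinite cyclic cover $\widehat{M}_{0,\infty}$ is obtained from $\widetilde{M}_\infty$, up to homotopy, by attaching finitely many cones on $(n-2)$--tori, which adds only finitely many cells and so preserves the existence of an aspherical model with finite $k$--skeleton. One small point of phrasing: the boundary components of $\widetilde{M}_\infty$ are copies of $T^{n-2}\times\bR$, not ``ends'', but since you immediately pass to the homotopy type this does not affect the argument.

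Your treatment of the second claim, however, is not a proof but a signpost toward one. You correctly identify that this is the substantive direction and that the singular locus of the pseudomanifold must be controlled, but the phrase ``Poincar\'e--Lefschetz duality with coefficients from the infinite cyclic cover'' together with a remark about the parity of $n$ does not by itself produce the conclusion; you have described where the argument lives rather than given it. Since the paper is content simply to cite \cite[Theorem~5]{LIMP} here, this is not a defect relative to the paper, but you should be aware that what you have written for this direction is a citation with commentary rather than an independent argument.
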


\section{Proofs of Theorems~\ref{thm:main} and \ref{thm:main2}}

\begin{proof}[Proof of Theorem~\ref{thm:main}]
  The construction in~\cite{IMM5} begins by constructing an $M$ satisfying the hypotheses \ref{itm:finitevolume}-\ref{itm:racg}, which additionally has the property that $f = \rho_\ast$ where $\rho \from M \to \bS^1$ is a fiber bundle map.  This fibration restricts on each boundary component to a fibration by totally geodesic sub-tori (see \cite[\S 1.13]{IMM5}).

  Choosing a collection $\mc{N}$ of filling kernels as in Lemma~\ref{lem:fillingkernels}, we obtain a map $q\from G\to \overline{G}$ and a finite index subgroup $\overline{G}_0\lhd\overline{G}$ as in the conclusion of Proposition~\ref{prop:summary}.  As in the proof of that proposition we set $G_0 = q^{-1}(\overline{G}_0)$ and let $\widetilde{M}$ be the corresponding finite-sheeted cover.

  On the level of topology, the map  $q|_{G_0} \from G_0 \to \overline{G}_0$ can be obtained from coning the geodesic $3$--tori fibers in the boundary components of $\widetilde{M}$ to points, to obtain a space $\widehat{M}$ with $\pi_1 \left( \widehat{M} \right) \cong \overline{G}_0$.  Moreover, there is an induced fibration $\widehat{M} \to \bS^1$ with fiber $\widehat{F}$ the fiber of $\widetilde{M} \to \bS^1$ with boundary $3$--tori coned to points.

  This is exactly the construction described in \cite[$\S3$]{IMM5} applied to the manifold $\widetilde{M}$.  As in \cite[$\S3$]{IMM5}, there is an infinite cyclic cover of the aspherical pseudomanifold $\widehat M$ which is homeomorphic to $\widehat{F}\times \bR$, and so $\widehat{F}$ is aspherical.  In particular $\pi_1\widehat{F}$ has finite type.  Moreover the arguments of \cite[$\S3$]{IMM5} show $\pi_1\widehat{F}$ is not hyperbolic.  
 \footnote{Here is an alternative argument that $\pi_1\widehat{F}$ is not hyperbolic.  Since $\pi_1\hat M$ is torsion-free hyperbolic, $\pi_1\widehat{F}$ admits an infinite order atoroidal outer automorphism, namely the monodromy of the bundle.  If $\pi_1\widehat{F}$ were hyperbolic, a result of Sela \cite[Corollary 1.10]{sela:srII} would imply that $\pi_1\widehat{F}$ was a free product of free and surface groups -- see \cite[Chapter 2]{brinkmann:thesis}.  On the other hand $H_4(\pi_1\widehat{F}) = H_4(\widehat{F}) = \bZ$, since $\widehat{F}$ is a closed orientable $4$--dimensional pseudomanifold.}

  To get infinitely many groups as in Theorem~\ref{thm:main}, we may pass to subgroups of $\overline{G}_0$ of larger and larger finite index, which exist because $\overline{G}_0$ is special and hence residually finite.  To distinguish these, we use simplicial volume.  (This is a homotopy invariant, so the simplicial volume of $\overline{G}_0$ is the same as the simplicial volume of $\widehat{M}$.)
  Since $\widehat{M}$ is a negatively curved closed pseudomanifold, its simplicial volume is positive (see \cite{Ya97} for an explicit estimate).  Simplicial volume is multiplicative under covers (see \cite[Proposition 7.2]{Frigerio}), so the simplical volumes of our finite index subgroups of $\overline{G}_0$ go to infinity.  This establishes Theorem~\ref{thm:main}.
\end{proof}

\begin{proof}[Proof of Theorem~\ref{thm:main2}]
  In~\cite{LIMP} Llosa Isenrich--Martelli--Py give an example of a hyperbolic $8$--manifold satisfying the hypotheses \ref{itm:finitevolume}-\ref{itm:racg} and with the additional property that $\ker(f)$ has property $\mc{F}_3$ but not $\mc{F}_4$.  Theorem~\ref{thm:limp} implies that these finiteness properties persist in Dehn fillings as described in Proposition~\ref{prop:summary}.  We therefore obtain a special hyperbolic closed pseudo-manifold group with a subgroup of type $\mc{F}_3$ but not of type $\mc{F}_4$.  As in the proof of~\ref{thm:main}, since this group is residually finite, we can find infinitely many examples by passing to finite index subgroups.
\end{proof}

\section{Questions and remark}

\begin{question}
  We see that these groups are QCERF.  Are they LERF?  
\end{question}

\begin{question}
 In the language of Lubotzky--Manning--Wilton \cite{LMW} we prove that a \emph{positive fraction} of $2\pi$--fillings as above are virtually special.  Is this true for all sufficiently long $2\pi$--fillings?
\end{question}

\begin{question}
 Is it possible to obtain infinitely many commensurability classes in the conclusions of Theorems~\ref{thm:main} and/or \ref{thm:main2}?
\end{question}

\begin{remark}
In \cite{Fujiwara}, Fujiwara starts with the fibered $5$--manifold of \cite{IMM5} and does a manifold filling to obtain an aspherical manifold (with relatively hyperbolic fundamental group) which fibers over the circle.  In a subsequent paper, we will explain that Fujiwara's examples can be constructed to have residually finite fundamental group.
\end{remark}

\end{document}